\newtheorem{theorem}{Theorem}[section]
\newtheorem*{theorem*}{Theorem}
\newtheorem{lemma}[theorem]{Lemma}
\newtheorem{corollary}[theorem]{Corollary}
\theoremstyle{remark}
\theoremstyle{definition}
\newtheorem{example}[theorem]{Example}
\newcommand{\Spvek}[2][r]{%
  \gdef\@VORNE{1}
  \left(\hskip-\arraycolsep%
    \begin{array}{#1}\vekSp@lten{#2}\end{array}%
  \hskip-\arraycolsep\right)}
\def\vekSp@lten#1{\xvekSp@lten#1;vekL@stLine;}
\def\vekL@stLine{vekL@stLine}
\def\xvekSp@lten#1;{\def\temp{#1}%
  \ifx\temp\vekL@stLine
  \else
    \ifnum\@VORNE=1\gdef\@VORNE{0}
    \else\@arraycr\fi%
    #1%
    \expandafter\xvekSp@lten
  \fi}
\journal{}
\begin{document}

\begin{frontmatter}



\title{On generating of idempotent aggregation functions on finite lattices
\footnote{Preprint of an article published by Elsevier in the Information Sciences 430–431, Pages 39-45. It is available online at: \newline https://www.sciencedirect.com/science/article/pii/S0020025516316164}}

\author[up]{Michal Botur}
\ead{ michal.botur@upol.cz}

\author[up]{Radom\'ir Hala\v{s}}
\ead{radomir.halas@upol.cz}

\author[up,stu]{Radko Mesiar}
\ead{radko.mesiar@stuba.sk}

\author[up,sav]{Jozef P\'ocs}
\ead{pocs@saske.sk}

\address[up]{Department of Algebra and Geometry, Faculty of Science, Palack\'y University Olomouc, 17. listopadu 12, 771 46 Olomouc, Czech Republic}
\address[stu]{Department of Mathematics and Descriptive Geometry, Faculty of Civil Engineering, Slovak University of Technology in Bratislava, Radlinsk\'eho 11, 810 05 Bratislava 1, Slovakia}
\address[sav]{Mathematical Institute, Slovak Academy of Sciences, Gre\v s\'akova 6, 040 01 Ko\v sice, Slovakia}

\begin{abstract}
In a recent paper \cite{HP} we proposed the study of aggregation functions on lattices via clone theory approach. Observing that aggregation functions on lattices just correspond to $0,1$-monotone clones, as the main result of \cite{HP}, we have shown that all aggregation functions on a finite lattice $L$ can be obtained as usual composition of lattice operations $\wedge,\vee$, and certain unary and binary aggregation functions.

The aim of this paper is to present a generating set for the class of intermediate (or, equivalently, idempotent) aggregation functions. This set consists of lattice operations and certain ternary idempotent aggregation functions.
\end{abstract}

\begin{keyword}
Idempotent aggregation function\sep generating set \sep finite lattice.

\MSC 06B99 

\end{keyword}

\end{frontmatter}

\section{Introduction}

Aggregation is a process when (usually numerical) data are merged in a single output. Mathematically, the process of aggregation is based on the concept of aggregation function. 

The most natural examples of aggregation functions widely used in experimental sciences are means and averages (such as e.g. the arithmetic mean). These belong to a widely studied class of so-called internal aggregation functions, firstly mentioned by Cauchy already in 1821, with a huge variety of applications.    
Nowadays, aggregation functions are successfully applied in many different branches of science, we can mention e.g. social sciences, computer science, psychology etc. 

As the process of aggregation somehow "synthesizes" the input data, aggregation functions cannot be arbitrary and have to fulfill some natural minimal requirements. This can be translated  into the condition that the output value should lie in the same interval as the input ones, and the least and the greatest values should be preserved.
Another widely accepted property of aggregation functions is that the output value should increase or at least stay constant whenever the input values increase.

The theory of aggregation functions is well developed in a case when the input (and, consequently, the output) values of 
these functions lie in a nonempty interval of reals, bounded or not. For details, we can refer the reader to the comprehensive monograph \cite{GMRP}.
During several last years, the theory was enlarged to lattice-based data, i.e. when 
the input (as well as the output) data have a structure of a lattice. This more general approach 
allows us to work e.g. with data which are not linearly ordered or when information 
about the input data is incomplete.
   
Recall that a lattice is 
an algebra $(L;\vee,\wedge)$, where $L$ is a nonempty set with two binary operations $\vee$ and $\wedge$ representing 
suprema and infima. Let us mention that lattice theory is a well established discipline of general algebra. There are 
several monographs on this topic, among them the most frequently used and quoted is the book by G. Gr\"atzer, \cite{G1}.
  
The formal definition of aggregation function is as follows:\\
\noindent
An aggregation function on a bounded lattice $L$ is a function $A: L^n\to L$ that 
\begin{itemize}
\item [(i)] is nondecreasing (in each variable), i.e. for any $\mathbf{x}, \mathbf{y}\in L^n$: 
$$A(\mathbf{x})\leq A(\mathbf{y})\,\, \text{ whenever }\,\, \mathbf{x}\leq \mathbf{y},$$

\item [(ii)] fulfills the boundary conditions
$$A(0,\dots,0)=0 \quad\text{ and }\quad A(1,\dots,1)=1.$$
\end{itemize}

The integer $n$ represents the arity of the aggregation function.

The set of all $n$-ary aggregation functions can be naturally ordered via component-wise ordering, i.e., if $A$ and $B$ are $n$-ary aggregation functions, then $A\leq B$ provided $A(\mathbf{x})\leq B(\mathbf{x})$ for all $\mathbf{x}\in L^n$. Note, that with respect to such ordering the set of all $n$-ary aggregation functions forms a bounded lattice. In the sequel, when referring to the order of aggregation functions, we have in mind this component-wise ordering. 

Clearly one of the central problems in aggregation theory is to give their construction methods. Much of work has already been done in this direction which fact can be easily demonstrated by the extensive literature including book chapters, we refer the reader e.g. a standard monograph \cite{GMRP}. It can be recognized that methods like composed aggregation, weighted aggregation, forming ordinal sums etc., look quite different and each of them relies on a very specific approach. In a classical case, the idea is based on standard arithmetical operations on the real line and fixed real functions. 

From the point of view of universal algebra, aggregation functions on a lattice $L$ form a clone (or, equivalently, a composition-closed set of functions containing the projections), the so-called aggregation clone of $L$. Recall that the clone theory is a very well established discipline of universal algebra. In principle, it deals with function algebras and its development was mostly initiated by studies in many-valued logic. For details, we refer the reader to the books \cite{Lau,KP} or to the overview paper \cite{KPS}.    

The clone of aggregation functions on $L$ forms a subclone of the so-called monotone clone of $L$ consisting of all functions on $L$ preserving the lattice order. It is well-known that for a finite lattice $L$, any clone on $L$ containing the lattice operations is finitely generated. Although generating sets of monotone clones on finite lattices can be found e.g. in \cite{Lau}, these cannot be directly used for generating of any of its subclones. In \cite{HP}, for any $n$-element lattice, we presented explicitly at most $(2n+2)$-element generating set of aggregation functions, consisting of lattice binary operations, at most $n$ unary functions and at most $n$ binary aggregation functions. Consequently, we have shown that any aggregation function on $L$ arises as the usual term composition of the above mentioned set of generating functions. Moreover, contrary to the case of the monotone clone, we have shown that each generating set of the aggregation clone containing the lattice operations, must posses at least one another operation of arity higher than one. Let us mention that for the classical case when $L=[0,1]$ is the unit interval of reals, another generating set of functions has been presented in \cite{HMP3}. 


Although the above mentioned generating set allows us to construct the set of all aggregation functions on a bounded lattice $L$, there is a question how to generate some of its important subclasses being composition-closed. For example, all intermediate (or, equivalently, idempotent) aggregation functions form such a class. 
Since there is only one unary idempotent function, namely the identity function, the generating set described in \cite{HP} cannot be applied.
The aim of this paper is to present a generating set for the class of all idempotent aggregation functions, members of which will be itself idempotent aggregation functions. In fact, as the main result we will show that certain ternary idempotent aggregation functions, together with the lattice operations form a generating set. We expect the extension of our results for certain important composition-closed subclasses of idempotent aggregation functions, especially the class of Sugeno integrals \cite{HMP1,HMP2} on bounded distributive lattices.

\section{Idempotent aggregation functions}

For a set $X$ and a natural number $n$, denote by $O_{n}(X)$ the set of all $n$-ary functions on $X$, i.e. the 
mappings $f\colon X^n\to X$. We put $O(X)=\bigcup_{n=1}^{\infty} O_{n}(X)$. 

Let $X$ be a set and $n\in\mathbb N$ be a positive integer. For any $i\leq n$, the {\it $i$-th $n$-ary projection} 
is for all $x_1,\dots,x_n\in X$ defined by 
\begin{equation}\label{eqr1}
p_i^n(x_1,\dots,x_n):=x_i.
\end{equation}
Composition of functions on $X$ forms from one $k$-ary function $f\colon X^k\to X$ and $k$ $n$-ary functions $g_1,\dots,g_k\colon X^n\to X$, 
an $n$-ary function $f(g_1,\dots,g_k)\colon X^n\to X$ defined by
\begin{equation}\label{eqr2}
f\big(g_1,\dots,g_k\big)(x_1,\dots,x_n):=f\big(g_1(x_1,\dots,x_n),\dots,g_k(x_1,\dots,x_n)\big),
\end{equation}
for all $x_1,\dots,x_n\in X$. 

Notice that for $k=n=1$, composition is a usual product of selfmaps.

Given a subset $T\subseteq O(X)$ of functions on $X$, we denote by $[T]$ the least set (with respect to set inclusion) of functions on $X$ containing $T$ and being closed under composition. $T$ is called composition-closed whenever $T=[T]$.   
Obviously, $O(X)$ represents the composition-closed set. As composition of $n$-ary functions on $X$ yields an $n$-ary function, the set $O_n(X)$ is composition-closed, too.  

Given a bounded lattice $L$, recall
that an $n$-ary aggregation function $f$ on $L$ is said to be idempotent if it satisfies  
$ f(x,\dots,x)=x$ for all $x\in L$.

For a positive integer $n\geq 1$ denote by $\mathsf{Agg}^n(L)$ the set of all $n$-ary aggregation functions defined on the lattice $L$. Further, we put $\mathsf{Agg}(L)=\bigcup_{i=1}^{\infty} \mathsf{Agg}^n(L)$, i.e., $\mathsf{Agg}(L)$ denotes the set of all aggregation functions on $L$. Analogously, we denote by $\mathsf{Id}^n(L)$ the set of all $n$-ary idempotent aggregation functions on $L$ and by $\mathsf{Id}(L)$ the set of all idempotent aggregation functions. 
For any subset $T\subseteq \mathsf{Agg}(L)$ of aggregation functions, the symbol $T^n$ denotes all $n$-ary functions from $T$, i.e., $T^n= T\cap \mathsf{Agg}^n(L)$. 

Considering the functions $\bigwedge_{i=1}^n x_i$ and $\bigvee_{i=1}^n x_i$ as dominating or
dominated functions on a lattice $L$, we obtain three main classes of aggregation functions:
conjunctive functions, disjunctive functions and intermediate functions.

Recall that an aggregation function $f:L^n\to L$ on $L$ is intermediate if it fulfills the inequalities 
\begin{equation}\label{eq1}
 \bigwedge_{i=1}^n x_i \leq f(x_1,\dots,x_n)\leq \bigvee_{i=1}^n x_i 
\end{equation}
for all $(x_1,\dots,x_n)\in L^n$. 

One can easily prove the following elementary but for our purposes important property of idempotent functions: 
\begin{lemma}
Any projection is idempotent and idempotent aggregation functions form a composition-closed class.
\end{lemma}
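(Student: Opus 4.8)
The plan is to verify both assertions directly from the definitions, since each is essentially formal. For the first part, fix the $i$-th $n$-ary projection $p_i^n$. Idempotency is immediate from \eqref{eqr1}: $p_i^n(x,\dots,x)=x$ for every $x\in L$. One should also record, for the sake of the statement being about idempotent \emph{aggregation} functions, that $p_i^n$ really is an aggregation function --- it is nondecreasing because $\mathbf{x}\le\mathbf{y}$ forces $x_i\le y_i$, and it satisfies the boundary conditions since $p_i^n(0,\dots,0)=0$ and $p_i^n(1,\dots,1)=1$. Hence $p_i^n\in\mathsf{Id}^n(L)$.

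For the second part, take $f\in\mathsf{Id}^k(L)$ and $g_1,\dots,g_k\in\mathsf{Id}^n(L)$, and set $h=f(g_1,\dots,g_k)$ as in \eqref{eqr2}. I would check the three relevant properties in turn. That $h$ is again an aggregation function (nondecreasing, plus $h(0,\dots,0)=f(g_1(0,\dots,0),\dots,g_k(0,\dots,0))=f(0,\dots,0)=0$ and dually at $1$) is already known, being exactly the fact that $\mathsf{Agg}(L)$ is a clone. The only point with any genuine content is idempotency of $h$: for every $x\in L$,
\[
h(x,\dots,x)=f\big(g_1(x,\dots,x),\dots,g_k(x,\dots,x)\big)=f(x,\dots,x)=x,
\]
where the middle equality uses idempotency of each $g_j$ and the last one idempotency of $f$. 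Thus $h\in\mathsf{Id}^n(L)$, so $\mathsf{Id}(L)=[\mathsf{Id}(L)]$.

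I expect no real obstacle here: the whole argument is a single application of the composition formula \eqref{eqr2}. The only mild subtlety worth flagging is that the class $\mathsf{Id}(L)$ is defined as idempotent \emph{aggregation} functions, so monotonicity and the boundary conditions must be checked to be inherited under composition as well; but this is precisely the already-available statement that $\mathsf{Agg}(L)$ is composition-closed, so it costs nothing.
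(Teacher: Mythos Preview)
Your proof is correct and is exactly the natural direct verification. The paper itself states this lemma without proof, treating it as elementary, so your argument is precisely the routine check the authors had in mind.
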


Recall that a composition-closed set of functions containing all the projections \eqref{eqr1} is referred to as clone. Hence for any bounded lattice $L$, the set $\mathsf{Id}(L)$ forms a clone, in the sequel called the idempotent clone on $L$.

Another important clone, more precisely a subclone of the idempotent clone, is formed by idempotent lattice polynomial functions, see e.g. \cite{G1,HP1}. In the case of bounded distributive lattices, they coincide with the Sugeno integrals \cite{Marichal}, which represent an important class of aggregation functions.

It is also well known that the classes of idempotent and intermediate aggregation functions coincide. To made the paper 
self-consistent, below we provide a proof of this elementary fact.  

\begin{lemma}\label{lem1}
An aggregation function $f\colon L^n\to L$ is idempotent if and only if it is intermediate.
\end{lemma}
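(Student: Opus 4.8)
The plan is to prove the equivalence by establishing two implications, each of which is a short monotonicity argument exploiting the boundary conditions together with either idempotence or the intermediate inequalities.

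First I would show that intermediate implies idempotent: this direction is immediate, since substituting $x_1=\dots=x_n=x$ into \eqref{eq1} gives $\bigwedge_{i=1}^n x = x \le f(x,\dots,x)\le \bigvee_{i=1}^n x = x$, forcing $f(x,\dots,x)=x$. No use of monotonicity is needed here.

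For the converse, assume $f$ is idempotent and fix $(x_1,\dots,x_n)\in L^n$. Put $m=\bigwedge_{i=1}^n x_i$ and $M=\bigvee_{i=1}^n x_i$. Then $(m,\dots,m)\le (x_1,\dots,x_n)\le (M,\dots,M)$ componentwise, so monotonicity (property (i)) gives $f(m,\dots,m)\le f(x_1,\dots,x_n)\le f(M,\dots,M)$. Applying idempotence to the two outer terms yields $m\le f(x_1,\dots,x_n)\le M$, which is precisely \eqref{eq1}. Hence $f$ is intermediate.

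I do not anticipate any real obstacle: both directions are one-line arguments, and the only properties invoked are the defining conditions of an aggregation function plus idempotence or the intermediate inequalities. The only point worth stating carefully is that $m$ and $M$ are well-defined elements of $L$ (finite, or at least lattice, meets and joins of the given tuple), so that $(m,\dots,m)$ and $(M,\dots,M)$ are legitimate arguments of $f$; this is automatic for an $n$-ary tuple in a lattice. Note also that the boundary conditions themselves are not needed for this lemma — they are subsumed by idempotence on the one side and follow from \eqref{eq1} applied at the bottom and top elements on the other — so the proof is genuinely just the squeeze argument above.
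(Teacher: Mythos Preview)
Your proof is correct and is essentially identical to the paper's own argument: both directions are the same squeeze via monotonicity and idempotence (for idempotent $\Rightarrow$ intermediate) and the trivial substitution $x_1=\dots=x_n=x$ into \eqref{eq1} (for intermediate $\Rightarrow$ idempotent). The only cosmetic difference is that you present the two implications in the opposite order.
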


\begin{proof}
Assume that $f$ is idempotent. Then the monotonicity of $f$ yields  
$$ \bigwedge_{i=1}^n x_i = \textstyle f\big(\bigwedge\limits_{i=1}^n x_i,\dots, \bigwedge\limits_{i=1}^n x_i \big)\leq f(x_1,\dots, x_n) \leq f\big(\bigvee\limits_{i=1}^n x_i,\dots, \bigvee\limits_{i=1}^n x_i \big)=\displaystyle \bigvee_{i=1}^n x_i$$
for each $\mathbf{x}\in L^n$.

Conversely, for an arbitrary $x\in L$, from \eqref{eq1} we obtain
$$ x=\bigwedge_{i=1}^n x \leq f(x,\dots,x) \leq \bigvee_{i=1}^n x =x.$$
\end{proof}

In what follows we use the lattice operations $\bigvee$ and $\bigwedge$ with respect to arbitrary, but finite arity. Obviously, both of them can be obtained by finite composition of the binary lattice operations $\vee$ and $\wedge$, respectively. Note that after their application on $n$-ary functions, the values of the resulting function are obtained component-wise, hence the resulting function represents supremum, infimum respectively, in the lattice of aggregation functions. 

\begin{theorem}\label{thm1}
Let $L$ be a finite lattice, and $T\subseteq \mathsf{Agg}(L)$ be a nonempty set containing the lattice operations $\vee$ and $\wedge$ and  that is closed under composition. 
Then for any $f\in T^n$ and any $n$-tuple $\mathbf{a}\in L^n$ the function
\begin{equation}\label{eq2}
h_{f(\mathbf{a})}^T=\bigvee\{g\in T^n\mid g(\mathbf{a})=f(\mathbf{a})\}
\end{equation}
belongs to the set $T$. Moreover the equality 
\begin{equation}\label{eq3}
f(\mathbf{x})=\bigwedge_{\mathbf{a}\in L^{n}} h_{f(\mathbf{a})}^T(\mathbf{x})
\end{equation}
holds for all $\mathbf{x}\in L^n$.
\end{theorem}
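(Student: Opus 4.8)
The plan is to establish the two claims separately, first membership of each $h_{f(\mathbf{a})}^T$ in $T$, then the decomposition formula \eqref{eq3}.

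For the first part, fix $f\in T^n$ and $\mathbf{a}\in L^n$. Since $L$ is finite, $T^n\subseteq \mathsf{Agg}^n(L)$ is a finite set, so the join defining $h_{f(\mathbf{a})}^T$ in \eqref{eq2} is a finite join of members of $T^n$, taken componentwise in the lattice of $n$-ary aggregation functions. As noted in the paragraph preceding the theorem, such a finite componentwise join is obtained by finitely iterating the binary operation $\vee$, and since $\vee\in T$ and $T$ is closed under composition, the resulting function lies in $T$. One subtlety to check is that the index set $\{g\in T^n\mid g(\mathbf{a})=f(\mathbf{a})\}$ is nonempty — but it contains $f$ itself, so $h_{f(\mathbf{a})}^T$ is well defined and indeed $h_{f(\mathbf{a})}^T\in T^n$.

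For the decomposition \eqref{eq3}, fix $\mathbf{x}\in L^n$ and argue the two inequalities. For "$\geq$": for every $\mathbf{a}\in L^n$, the function $f$ belongs to the set over which we take the join in the definition of $h_{f(\mathbf{a})}^T$ (since trivially $f(\mathbf{a})=f(\mathbf{a})$), hence $h_{f(\mathbf{a})}^T\geq f$ pointwise, in particular $h_{f(\mathbf{a})}^T(\mathbf{x})\geq f(\mathbf{x})$; taking the meet over all $\mathbf{a}$ preserves this, giving $\bigwedge_{\mathbf{a}\in L^n} h_{f(\mathbf{a})}^T(\mathbf{x})\geq f(\mathbf{x})$. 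For "$\leq$": it suffices to exhibit one particular $\mathbf{a}$ for which $h_{f(\mathbf{a})}^T(\mathbf{x})\leq f(\mathbf{x})$, and the natural choice is $\mathbf{a}=\mathbf{x}$ itself. Then by definition $h_{f(\mathbf{x})}^T=\bigvee\{g\in T^n\mid g(\mathbf{x})=f(\mathbf{x})\}$, and every $g$ in that set satisfies $g(\mathbf{x})=f(\mathbf{x})$ by the very membership condition, so the componentwise join evaluated at $\mathbf{x}$ equals $f(\mathbf{x})$; thus $h_{f(\mathbf{x})}^T(\mathbf{x})=f(\mathbf{x})$, and a fortiori the meet over all $\mathbf{a}\in L^n$ is $\leq f(\mathbf{x})$. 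Combining the two inequalities yields \eqref{eq3}.

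The argument is essentially bookkeeping once one observes that componentwise joins and meets of aggregation functions are computed pointwise and that $\vee,\wedge\in T$ generate all finite such combinations. The only place demanding a little care is confirming that $h_{f(\mathbf{a})}^T$ and the final meet are themselves legitimate aggregation functions in $T$ — i.e. that finiteness of $L$ makes every join and meet involved a finite one, and that the boundary and monotonicity conditions are inherited; none of this is hard, but it is where the hypothesis that $L$ is finite is genuinely used. I do not anticipate a serious obstacle; the main thing to get right is simply that evaluating $h^T_{f(\mathbf{x})}$ at the point $\mathbf{x}$ returns exactly $f(\mathbf{x})$, which is what forces the meet down to $f(\mathbf{x})$.
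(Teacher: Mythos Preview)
Your proposal is correct and follows essentially the same argument as the paper's proof: finiteness of $L$ makes the defining join finite and nonempty (since $f$ itself lies in the index set), closure of $T$ under composition with $\vee$ places $h_{f(\mathbf{a})}^T$ in $T$, and the two inequalities for \eqref{eq3} are obtained exactly as you describe, the key observation being $h_{f(\mathbf{x})}^T(\mathbf{x})=f(\mathbf{x})$.
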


\begin{proof}
The finiteness of the lattice $L$ yields that the set $\{g\in T^n\mid g(\mathbf{a})=f(\mathbf{a})\}$ is finite. Moreover it is non-empty, since $f$ is a member of this set. According to the assumptions on the set $T$, we obtain that the function 
$$h_{f(\mathbf{a})}^T=\bigvee\{g\in T^n\mid g(\mathbf{a})=f(\mathbf{a})\}$$
is a composition of functions from $T$, thus it belongs to $T$ as well.
Clearly $f\in \{g\in T^n\mid g(\mathbf{a})=f(\mathbf{a})\}$, which yields $f\leq h_{f(\mathbf{a})}^T$. From this we obtain $f\leq \bigwedge_{\mathbf{a}\in L^{n}} h_{f(\mathbf{a})}^T$.

Conversely for any $\mathbf{x}\in L^n$ we have
$$ \big( \bigwedge_{\mathbf{a}\in L^n} h_{f(\mathbf{a})}^T\big)(\mathbf{x}) \leq h_{f(\mathbf{x})}^T(\mathbf{x})=f(\mathbf{x}),$$
which completes the proof.
\end{proof}

Observe that the function $h_{f(\mathbf{a})}^T$ is the greatest aggregation function $f$, which belongs to the set $T$ satisfying $f(\mathbf{a})=h_{f(\mathbf{a})}^T(\mathbf{a})$.
From this point of view, Theorem \ref{thm1} provides a general method for generating composition-closed subsets of the set $\mathsf{Agg}(L)$. 
To illustrate this method we briefly describe a generating set for $T=\mathsf{Agg}(L)$, $L$ finite lattice. For more details we refer the reader to the paper \cite{HP}.

\begin{example}
Let $f\in \mathsf{Agg}^{n}(L)$ be an $n$-ary aggregation function. It can be shown that 
$$ h_{f(\mathbf{a})}^{\mathsf{Agg}(L)}(\mathbf{x})= 
\begin{cases}
0, \mbox{ if } \mathbf{x}=(0,\dots,0); \\
f(\mathbf{a}), \mbox{ if } \mathbf{0}<\mathbf{x}\leq \mathbf{a}; \\
1, \mbox{ otherwise}.
\end{cases}
$$
Consequently, it suffices to generate the functions $h_{f(\mathbf{a})}^{\mathsf{Agg}(L)}$ from other simpler ones. For this, one can consider the following system of unary and binary aggregation functions. Given $a\in L$ we put
$$ 
\mu_a(x)=\begin{cases}&0, \text{ if }x\leq a, x\neq 1; \\
             &1, \text{ otherwise;}\\   
\end{cases}
\quad
x\oplus_a y=\begin{cases}&1, \text{ if }x=1,y=1; \\
                         &0, \text{ if }x=0,y=0;\\   
                         &a, \text{ otherwise.}    
              \end{cases}
$$
Then for $\mathbf{a}=(a_1,\dots,a_n)\in L^n$ denoting by $\hat{J}_{\mathbf{a}}=\{1\leq i\leq n: a_i\neq 1\}$ we obtain
$$
h_{f(\mathbf{a})}^{\mathsf{Agg}(L)}(\mathbf{x})= \bigvee_{i\in \hat{J}_{\mathbf{a}}}\mu_{a_i}(x_i)\; \vee\; \bigoplus_{i=1}^{n}{\hspace{-1mm}_{f(\mathbf{a})}}\ x_i
$$ 
for all $\mathbf{x}\in L^n$. Note that the symbol $\bigoplus_{i=1}^{n}{\hspace{-1mm}_{f(\mathbf{a})}}$ denotes the composition of the $n-1$ binary functions $\oplus_{f(\mathbf{a})}$. 

Hence, with respect to Theorem \ref{thm1}, the set $\{\mu_a, \oplus_a: a\in L\}$ together with the lattice operations $\vee$ and $\wedge$ generates the set $\mathsf{Agg}(L)$ of all aggregation functions. 
\end{example}

Through the rest of the paper, $L$ will denote a finite lattice. In what follows, for a given $n$-ary idempotent aggregation function $f$ and for each $\mathbf{a}\in L^n$ we identify the functions $h_{f(\mathbf{a})}^{\mathsf{Id}(L)}$ with idempotent aggregation functions of a certain kind. 

Let $n\geq 1$ be a positive integer. For $\mathbf{a}\in L^n$ and $b\in L$ we define 
\begin{equation}\label{eq4}
\chi_{\mathbf{a},b}(\mathbf{x}):=
\begin{cases}
\displaystyle b\wedge \bigvee_{i=1}^n x_i,\ \mbox{if}\; \mathbf{x}\leq\mathbf{a}; \\
\displaystyle \bigvee_{i=1}^n x_i,\ \mbox{in other cases}.
\end{cases}
\end{equation}


\begin{lemma}\label{lem2}
If $\mathbf{a}=(a_1,\dots,a_n)\in L^n$ and $b\in L$ satisfy $\bigwedge_{i=1}^n a_i \leq b$, then $\chi_{\mathbf{a},b}\in \mathsf{Id}^n(L)$.
\end{lemma}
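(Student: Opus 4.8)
The plan is to verify directly the three defining properties of an $n$-ary idempotent aggregation function for $\chi_{\mathbf{a},b}$: idempotency (equivalently, by Lemma~\ref{lem1}, the intermediate inequalities), monotonicity, and the boundary conditions. The intermediate inequalities and boundary conditions will be almost immediate; monotonicity under the hypothesis $\bigwedge_{i=1}^n a_i\le b$ will be the only step requiring care, and it is the main obstacle.

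First I would observe that in both cases of the definition \eqref{eq4} we have $\bigwedge_{i=1}^n x_i\le \chi_{\mathbf{a},b}(\mathbf{x})\le \bigvee_{i=1}^n x_i$: in the second case the value is exactly $\bigvee_{i=1}^n x_i$, while in the case $\mathbf{x}\le\mathbf{a}$ the value is $b\wedge\bigvee_{i=1}^n x_i\le\bigvee_{i=1}^n x_i$, and the lower bound $\bigwedge_{i=1}^n x_i\le b\wedge\bigvee_{i=1}^n x_i$ holds because $\bigwedge_{i=1}^n x_i\le\bigwedge_{i=1}^n a_i\le b$ (using $\mathbf{x}\le\mathbf{a}$ and the hypothesis). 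Thus $\chi_{\mathbf{a},b}$ is intermediate, hence by Lemma~\ref{lem1} idempotent, and in particular the boundary conditions $\chi_{\mathbf{a},b}(\mathbf 0)=0$, $\chi_{\mathbf{a},b}(\mathbf 1)=1$ follow from idempotency (alternatively, $\mathbf 0\le\mathbf a$ gives value $b\wedge 0=0$, and for $\mathbf 1$ either $\mathbf 1\le\mathbf a$, forcing $\mathbf a=\mathbf 1$ and value $b\wedge 1$, which then must be $1$ since $1=\bigwedge a_i\le b$; or $\mathbf 1\not\le\mathbf a$ and the value is $\bigvee 1 = 1$).

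For monotonicity, take $\mathbf{x}\le\mathbf{y}$ in $L^n$ and split into cases according to which clause of \eqref{eq4} applies to each. If both lie below $\mathbf{a}$, then $\chi_{\mathbf{a},b}(\mathbf{x})=b\wedge\bigvee x_i\le b\wedge\bigvee y_i=\chi_{\mathbf{a},b}(\mathbf{y})$ by monotonicity of $\wedge$ and $\bigvee$. If neither lies below $\mathbf{a}$, then $\chi_{\mathbf{a},b}(\mathbf{x})=\bigvee x_i\le\bigvee y_i=\chi_{\mathbf{a},b}(\mathbf{y})$. If $\mathbf{x}\le\mathbf{a}$ but $\mathbf{y}\not\le\mathbf{a}$, then $\chi_{\mathbf{a},b}(\mathbf{x})=b\wedge\bigvee x_i\le\bigvee x_i\le\bigvee y_i=\chi_{\mathbf{a},b}(\mathbf{y})$. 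The remaining case $\mathbf{x}\not\le\mathbf{a}$ and $\mathbf{y}\le\mathbf{a}$ is vacuous, since $\mathbf{x}\le\mathbf{y}\le\mathbf{a}$ would force $\mathbf{x}\le\mathbf{a}$. Hence $\chi_{\mathbf{a},b}$ is nondecreasing in each variable. Combining monotonicity, the boundary conditions, and idempotency established above, we conclude $\chi_{\mathbf{a},b}\in\mathsf{Id}^n(L)$.
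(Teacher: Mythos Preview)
Your proof is correct and follows essentially the same approach as the paper: a direct verification of the boundary conditions, monotonicity, and idempotency by case analysis on whether $\mathbf{x}\le\mathbf{a}$ (and, for monotonicity, whether $\mathbf{y}\le\mathbf{a}$). The only organizational difference is that the paper checks the boundary conditions and idempotency each directly by cases, whereas you first establish the intermediate inequalities and read off idempotency and the boundary conditions from them; one small caveat is that Lemma~\ref{lem1} is stated for aggregation functions, so strictly speaking you should either prove monotonicity first or simply note (as the proof of Lemma~\ref{lem1} shows) that the implication ``intermediate $\Rightarrow$ idempotent'' holds for any function, not just aggregation functions.
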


\begin{proof}
First we show that $\chi_{\mathbf{a},b}$ fulfills the boundary conditions and it is monotone. Clearly $\chi_{\mathbf{a},b}(\mathbf{0})=b\wedge \bigvee_{i=1}^n 0=0$. If $\mathbf{a}\neq\mathbf{1}$, we obtain $\chi_{\mathbf{a},b}(\mathbf{1})=\bigvee_{i=1}^n 1=1$. For $\mathbf{a}=\mathbf{1}$ the assumptions yield $1=\bigwedge_{i=1}^n a_i \leq b\leq 1$, hence
$\chi_{\mathbf{a},b}(\mathbf{1})=b \wedge \bigvee_{i=1}^n 1 =b=1$. Further, to prove the monotonicity, let $\mathbf{x},\mathbf{y}\in L^n$ be such that $\mathbf{x}\leq \mathbf{y}$.
If $\mathbf{x}\leq \mathbf{y}\leq \mathbf{a}$, we obtain $\chi_{\mathbf{a},b}(\mathbf{x})=b \wedge \bigvee_{i=1}^n x_i \leq b \wedge \bigvee_{i=1}^n y_i=\chi_{\mathbf{a},b}(\mathbf{y})$. For $\mathbf{y}\nleq \mathbf{a}$ it follows that $\chi_{\mathbf{a},b}(\mathbf{x})\leq \bigvee_{i=1}^n x_i \leq \bigvee_{i=1}^n y_i=\chi_{\mathbf{a},b}(\mathbf{y})$.

To prove idempotency, assume that $x\in L$ is such that $(x,\dots,x)\leq \mathbf{a}$. Consequently $x\leq \bigwedge_{i=1}^n a_i\leq b$, which yields $\chi_{\mathbf{a},b}(x,\dots,x)= b \wedge \bigvee_{i=1}^n x=b\wedge x=x$. For $(x,\dots,x)\nleq \mathbf{a}$ we obtain $\chi_{\mathbf{a},b}(x,\dots,x)=\bigvee_{i=1}^n x=x$.
\end{proof}

%

\begin{lemma}\label{lem3}
Let $f\colon L^n\to L$ be an idempotent aggregation function and $\mathbf{a}\in L^n$. Then $h_{f(\mathbf{a})}^{\mathsf{Id}(L)}=\chi_{\mathbf{a},f(\mathbf{a})}$.
\end{lemma}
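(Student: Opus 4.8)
The plan is to verify the claimed identity $h_{f(\mathbf{a})}^{\mathsf{Id}(L)}=\chi_{\mathbf{a},f(\mathbf{a})}$ by showing that $\chi_{\mathbf{a},f(\mathbf{a})}$ is exactly the supremum appearing in formula \eqref{eq2} of Theorem \ref{thm1}, specialized to $T=\mathsf{Id}(L)$. For this I need two things: first, that $\chi_{\mathbf{a},f(\mathbf{a})}$ is itself a member of the set $\{g\in\mathsf{Id}^n(L)\mid g(\mathbf{a})=f(\mathbf{a})\}$ whose supremum defines $h_{f(\mathbf{a})}^{\mathsf{Id}(L)}$; second, that $\chi_{\mathbf{a},f(\mathbf{a})}$ dominates every other member of that set. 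Together these give $h_{f(\mathbf{a})}^{\mathsf{Id}(L)}\le\chi_{\mathbf{a},f(\mathbf{a})}$ and $\chi_{\mathbf{a},f(\mathbf{a})}\le h_{f(\mathbf{a})}^{\mathsf{Id}(L)}$, hence equality.

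The first point is essentially already done. Since $f$ is intermediate by Lemma \ref{lem1}, we have $\bigwedge_{i=1}^n a_i\le f(\mathbf{a})$, so Lemma \ref{lem2} (with $b=f(\mathbf{a})$) applies and gives $\chi_{\mathbf{a},f(\mathbf{a})}\in\mathsf{Id}^n(L)$. I then just need to check the evaluation at $\mathbf{a}$: since $\mathbf{a}\le\mathbf{a}$, the first case of \eqref{eq4} applies and $\chi_{\mathbf{a},f(\mathbf{a})}(\mathbf{a})=f(\mathbf{a})\wedge\bigvee_{i=1}^n a_i$; because $f$ is intermediate, $f(\mathbf{a})\le\bigvee_{i=1}^n a_i$, so this equals $f(\mathbf{a})$, as required.

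For the second, harder point, let $g\in\mathsf{Id}^n(L)$ with $g(\mathbf{a})=f(\mathbf{a})$; I must show $g\le\chi_{\mathbf{a},f(\mathbf{a})}$ pointwise. Fix $\mathbf{x}\in L^n$. If $\mathbf{x}\not\le\mathbf{a}$, then $\chi_{\mathbf{a},f(\mathbf{a})}(\mathbf{x})=\bigvee_{i=1}^n x_i$, and since $g$ is intermediate, $g(\mathbf{x})\le\bigvee_{i=1}^n x_i=\chi_{\mathbf{a},f(\mathbf{a})}(\mathbf{x})$. If $\mathbf{x}\le\mathbf{a}$, then $\chi_{\mathbf{a},f(\mathbf{a})}(\mathbf{x})=f(\mathbf{a})\wedge\bigvee_{i=1}^n x_i$; I must show $g(\mathbf{x})$ lies below both $f(\mathbf{a})$ and $\bigvee_{i=1}^n x_i$. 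The second is again intermediacy of $g$. For the first, monotonicity of $g$ together with $\mathbf{x}\le\mathbf{a}$ gives $g(\mathbf{x})\le g(\mathbf{a})=f(\mathbf{a})$. Hence $g(\mathbf{x})\le f(\mathbf{a})\wedge\bigvee_{i=1}^n x_i=\chi_{\mathbf{a},f(\mathbf{a})}(\mathbf{x})$, completing the domination claim.

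The main obstacle is conceptual rather than computational: one has to recognize that the two defining inequalities built into \eqref{eq4} — the factor $f(\mathbf{a})$ and the ceiling $\bigvee_{i=1}^n x_i$ — correspond precisely to the two constraints available on any competitor $g$, namely monotonicity (which propagates the pinned value $g(\mathbf{a})=f(\mathbf{a})$ downward over the order ideal $\{\mathbf{x}\le\mathbf{a}\}$) and intermediacy (which caps every value by $\bigvee x_i$). Once this matching is seen, the verification is the routine case split above. One should also double-check that the ``in other cases'' branch of \eqref{eq4} really is an upper bound attained within $\mathsf{Id}^n(L)$: the function $\bigvee_{i=1}^n x_i$ itself belongs to $\mathsf{Id}^n(L)$ but need not satisfy $g(\mathbf{a})=f(\mathbf{a})$ unless $f(\mathbf{a})=\bigvee_{i=1}^n a_i$, so the supremum \eqref{eq2} is genuinely taken over a restricted set and the upper bound on that branch is witnessed by $\chi_{\mathbf{a},f(\mathbf{a})}$ itself rather than by $\bigvee x_i$; this is exactly what the first point above records.
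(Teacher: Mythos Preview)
Your proof is correct and follows essentially the same approach as the paper's: both establish $\chi_{\mathbf{a},f(\mathbf{a})}\in\{g\in\mathsf{Id}^n(L)\mid g(\mathbf{a})=f(\mathbf{a})\}$ via Lemmas \ref{lem1} and \ref{lem2}, and then bound from above using intermediacy and monotonicity in the two cases $\mathbf{x}\le\mathbf{a}$ and $\mathbf{x}\nleq\mathbf{a}$. The only cosmetic difference is that the paper applies the upper-bound argument directly to $h_{f(\mathbf{a})}^{\mathsf{Id}(L)}$ (invoking Theorem \ref{thm1} to know it lies in $\mathsf{Id}(L)$ and satisfies $h_{f(\mathbf{a})}^{\mathsf{Id}(L)}(\mathbf{a})=f(\mathbf{a})$), whereas you bound an arbitrary competitor $g$ and then pass to the supremum; the content is identical.
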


\begin{proof}
According to Lemma \ref{lem1}, $\bigwedge_{i=1}^n a_i\leq f(\mathbf{a})$, hence applying Lemma \ref{lem2} the function $\chi_{\mathbf{a},f(\mathbf{a})}$ is idempotent. Further, $\chi_{\mathbf{a},f(\mathbf{a})}(\mathbf{a})= f(\mathbf{a})\wedge \bigvee_{i=1}^n a_i= f(\mathbf{a})$, where the last equality follows from \eqref{eq1} of Lemma \ref{lem1}.
Consequently $\chi_{\mathbf{a},f(\mathbf{a})}\in \{g\in \mathsf{Id}^n(L)\mid g(\mathbf{a})=f(\mathbf{a})\}$ and we obtain 
$$\chi_{\mathbf{a},f(\mathbf{a})}\leq \bigvee\{g\in \mathsf{Id}^n(L)\mid g(\mathbf{a})=f(\mathbf{a})\}=h_{f(\mathbf{a})}^{\mathsf{Id}(L)}.$$

Conversely, $h_{f(\mathbf{a})}^{\mathsf{Id}(L)}(\mathbf{x})\leq \bigvee_{i=1}^n x_i$ for all $\mathbf{x}\in L^n$, since by Theorem \ref{thm1} the function $h_{f(\mathbf{a})}^{\mathsf{Id}(L)}$ is idempotent. As $h_{f(\mathbf{a})}^{\mathsf{Id}(L)}(\mathbf{a})=f(\mathbf{a})$, it follows that $h_{f(\mathbf{a})}^{\mathsf{Id}(L)}(\mathbf{x})\leq f(\mathbf{a})$ for all $\mathbf{x}\in L^n$ satisfying $\mathbf{x}\leq \mathbf{a}$. In addition, we obtain 
$$ h_{f(\mathbf{a})}^{\mathsf{Id}(L)}(\mathbf{x})\leq \bigvee_{i=1}^n x_i, \mbox{ for } \mathbf{x}\nleq \mathbf{a}, $$ 
and
$$ h_{f(\mathbf{a})}^{\mathsf{Id}(L)}(\mathbf{x})\leq f(\mathbf{a}) \wedge \bigvee_{i=1}^n x_i, \mbox{ for } \mathbf{x}\leq \mathbf{a},$$ 
which shows $h_{f(\mathbf{a})}^{\mathsf{Id}(L)} \leq \chi_{\mathbf{a},f(\mathbf{a})}$.

\end{proof}

For generating the idempotent clone $\mathsf{Id}(L)$ we use certain ternary functions. Given $a,b,c,d\in L$ with $a\leq b,d \leq c$, we put

\begin{equation}\label{eq5}
\iota_{(a,b,c),d}(x_1,x_2,x_3):=
\begin{cases}
\displaystyle d\wedge \bigvee_{i=1}^3 x_i,\ \mbox{if}\; (x_1,x_2,x_3)\leq(a,b,c); \\
\displaystyle \bigvee_{i=1}^3 x_i,\ \mbox{in other cases}.
\end{cases}
\end{equation}

Observe that the above defined functions $\iota_{(a,b,c),d}(x_1,x_2,x_3)$ are special instances of the ternary idempotent functions of type $\chi$ defined by \eqref{eq4}. To improve the readability of formulas, we use the following simplified notation. For $\mathbf{a}=(a_1,\dots,a_n)\in L^n$ we sometimes use $\bigwedge \mathbf{a}$ instead of $\bigwedge_{i=1}^n a_i$ and, similarly, $\bigvee \mathbf{a}$ for 
$\bigvee_{i=1}^n a_i$.     

\begin{theorem}\label{thm2}
Let $f\colon L^n\to L$ be an idempotent aggregation function and $\mathbf{a}\in L^n$ be arbitrary. Then 
\begin{equation}\label{eq6}
h_{f(\mathbf{a})}^{\mathsf{Id}(L)}(\mathbf{x})=\bigvee_{i=1}^n \iota_{(\bigwedge \mathbf{a},a_i,\bigvee \mathbf{a}),f(\mathbf{a})}\big(\bigwedge \mathbf{x},x_i, \bigvee \mathbf{x}\big)
\end{equation}
for all $\mathbf{x}\in L^n$.
\end{theorem}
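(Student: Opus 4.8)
The plan is to reduce everything to Lemma~\ref{lem3}, which already identifies $h_{f(\mathbf{a})}^{\mathsf{Id}(L)}$ with $\chi_{\mathbf{a},f(\mathbf{a})}$, and then to verify by hand that the right-hand side of \eqref{eq6} equals $\chi_{\mathbf{a},f(\mathbf{a})}(\mathbf{x})$ for every $\mathbf{x}\in L^n$. Before that I would record a well-definedness remark: each $\iota_{(\bigwedge\mathbf{a},a_i,\bigvee\mathbf{a}),f(\mathbf{a})}$ is a legitimate function of type \eqref{eq5}, since $\bigwedge\mathbf{a}\le a_i\le\bigvee\mathbf{a}$ is immediate while $\bigwedge\mathbf{a}\le f(\mathbf{a})\le\bigvee\mathbf{a}$ follows from Lemma~\ref{lem1}; by the remark following \eqref{eq5} these are ternary functions of type $\chi$, hence members of $\mathsf{Id}^3(L)$ by Lemma~\ref{lem2}. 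Thus the right-hand side of \eqref{eq6} really is a composition of members of $\mathsf{Id}(L)$ with the lattice operations, which is what makes the formula useful.

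The first computational step is to simplify a single summand. Because $\bigwedge\mathbf{x}\le x_i\le\bigvee\mathbf{x}$, the inner join $\bigwedge\mathbf{x}\vee x_i\vee\bigvee\mathbf{x}$ collapses to $\bigvee\mathbf{x}$, so by \eqref{eq5}
\[
\iota_{(\bigwedge\mathbf{a},a_i,\bigvee\mathbf{a}),f(\mathbf{a})}\big(\textstyle\bigwedge\mathbf{x},x_i,\bigvee\mathbf{x}\big)=
\begin{cases}
f(\mathbf{a})\wedge\bigvee\mathbf{x}, & \text{if } \bigwedge\mathbf{x}\le\bigwedge\mathbf{a},\ x_i\le a_i,\ \bigvee\mathbf{x}\le\bigvee\mathbf{a};\\
\bigvee\mathbf{x}, & \text{otherwise.}
\end{cases}
\]
In particular every summand lies below $\bigvee\mathbf{x}$, hence the whole join on the right-hand side of \eqref{eq6} is $\le\bigvee\mathbf{x}$.

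Then I would split into the two cases appearing in the definition \eqref{eq4} of $\chi_{\mathbf{a},f(\mathbf{a})}$. If $\mathbf{x}\le\mathbf{a}$, then $x_i\le a_i$ for all $i$ and also $\bigwedge\mathbf{x}\le\bigwedge\mathbf{a}$, $\bigvee\mathbf{x}\le\bigvee\mathbf{a}$, so each summand equals $f(\mathbf{a})\wedge\bigvee\mathbf{x}$ and therefore so does their join, matching the first branch of \eqref{eq4}. If $\mathbf{x}\nleq\mathbf{a}$, choose an index $j$ with $x_j\nleq a_j$; then the threshold condition for the $j$-th summand fails, that summand equals $\bigvee\mathbf{x}$, and combined with the upper bound from the previous paragraph the join is exactly $\bigvee\mathbf{x}$, matching the second branch of \eqref{eq4}. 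Invoking Lemma~\ref{lem3} then finishes the argument.

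I do not anticipate a genuine obstacle; the proof is an elementary case distinction. The only point needing a moment's care is that in the case $\mathbf{x}\nleq\mathbf{a}$ the chosen $j$-th copy of $\iota$ is pushed onto its ``otherwise'' branch purely by its middle coordinate $x_j\nleq a_j$, regardless of whether $\bigwedge\mathbf{x}\le\bigwedge\mathbf{a}$ or $\bigvee\mathbf{x}\le\bigvee\mathbf{a}$ hold, while all the remaining summands do no harm precisely because each of them is bounded above by $\bigvee\mathbf{x}$.
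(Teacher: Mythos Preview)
Your proposal is correct and follows essentially the same route as the paper: reduce via Lemma~\ref{lem3} to showing the right-hand side equals $\chi_{\mathbf{a},f(\mathbf{a})}$, check the $\iota$-functions are well defined, and do the two-case split $\mathbf{x}\le\mathbf{a}$ versus $\mathbf{x}\nleq\mathbf{a}$ exactly as you describe. The only cosmetic difference is that in the case $\mathbf{x}\nleq\mathbf{a}$ the paper obtains the upper bound $\le\bigvee\mathbf{x}$ by invoking idempotency of the whole join (Lemma~\ref{lem1}), whereas you get it directly from your preliminary computation that each summand is $\le\bigvee\mathbf{x}$; both are fine.
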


\begin{proof}
Let $\mathbf{a}\in L^n$ be a fixed element. Denote by $\varphi$ the right side of equality \eqref{eq6}. 

By Lemma \ref{lem3}, it suffices to verify that $\varphi$ equals to $\chi_{\mathbf{a},f(\mathbf{a})}$.
Obviously, for any $i\in\{1,\dots,n\}$, $\bigwedge\mathbf{a}\leq a_i \leq \bigvee\mathbf{a}$ holds. Further, 
$\bigwedge\mathbf{a}\leq f(\mathbf{a})\leq \bigvee\mathbf{a}$ follows from Lemma \ref{lem1}. Consequently, the functions $\iota_{(\bigwedge \mathbf{a},a_i,\bigvee \mathbf{a}),f(\mathbf{a})}$ for $i\in\{1,\dots,n\}$ belong to the set of functions defined by \eqref{eq5}. These are idempotent by definition, and the same holds for $\varphi$ as it is a composition of idempotent functions, namely the lattice operations and $\iota$-type functions.

Let $\mathbf{x}\in L^n$ be such that $\mathbf{x}\leq \mathbf{a}$. Then 
$$ \big( \bigwedge_{j=1}^n x_j, x_i,\bigvee_{j=1}^n x_j \big) \leq \big( \bigwedge_{j=1}^n a_j, a_i,\bigvee_{j=1}^n a_j\big)$$ 
for all $i\in\{1,\dots,n\}$, which with respect to \eqref{eq5} yields
$$ \iota_{(\bigwedge \mathbf{a},a_i,\bigvee \mathbf{a}),f(\mathbf{a})}\big(\bigwedge \mathbf{x},x_i, \bigvee \mathbf{x}\big) = f(\mathbf{a})\wedge \big( \bigwedge \mathbf{x}\vee x_i \vee \bigvee \mathbf{x}\big)= f(\mathbf{a})\wedge \bigvee \mathbf{x}$$
for all $i\in\{1,\dots,n\}$. 
Consequently, we obtain
$$ \bigvee_{i=1}^n \iota_{(\bigwedge \mathbf{a},a_i,\bigvee \mathbf{a}),f(\mathbf{a})}\big(\bigwedge \mathbf{x},x_i, \bigvee \mathbf{x}\big) = f(\mathbf{a})\wedge \bigvee \mathbf{x}=\chi_{\mathbf{a},f(\mathbf{a})}(\mathbf{x}).$$

Further, assume $\mathbf{x}\nleq \mathbf{a}$. Lemma \ref{lem1} yields 
$$ \bigvee_{i=1}^n \iota_{(\bigwedge \mathbf{a},a_i,\bigvee \mathbf{a}),f(\mathbf{a})}\big(\bigwedge \mathbf{x},x_i, \bigvee \mathbf{x}\big) \leq \bigvee_{i=1}^n x_i,$$
since the function $\varphi$ is idempotent. On the other hand, due to $\mathbf{x}\nleq \mathbf{a}$, there is an index $j\in \{1,\dots,n\}$ such that $x_j\nleq a_j$. For this particular case we obtain by \eqref{eq5} 
$$ \iota_{(\bigwedge \mathbf{a},a_j,\bigvee \mathbf{a}),f(\mathbf{a})}\big(\bigwedge \mathbf{x},x_j, \bigvee \mathbf{x}\big)= \bigwedge \mathbf{x} \vee x_j \vee \bigvee \mathbf{x} = \bigvee \mathbf{x}, $$
which yields
$$ \bigvee \mathbf{x} \leq \bigvee_{i=1}^n \iota_{(\bigwedge \mathbf{a},a_i,\bigvee \mathbf{a}),f(\mathbf{a})}\big(\bigwedge \mathbf{x},x_i, \bigvee \mathbf{x}\big).$$
Finally, we have shown 
$$ \bigvee_{i=1}^n \iota_{(\bigwedge \mathbf{a},a_i,\bigvee \mathbf{a}),f(\mathbf{a})}\big(\bigwedge \mathbf{x},x_i, \bigvee \mathbf{x}\big) =\bigvee_{i=1}^n x_i=\chi_{\mathbf{a},f(\mathbf{a})}(\mathbf{x}).$$

\end{proof}

Putting Theorems \ref{thm1} and \ref{thm2} together, we obtain the following main theorem of the paper.

\begin{theorem}\label{thm3}
The clone $\mathsf{Id}(L)$ of all idempotent aggregation functions on a finite lattice $L$ is generated by the lattice operations $\vee$ and $\wedge$ and by the ternary $\iota$-type functions 
$\iota_{(a,b,c),d}$ where $a,b,c,d\in L$ with $a\leq b\leq c$ and  $a\leq d\leq c$. Moreover, if $f\colon L^n \to L$ is an idempotent aggregation function, then 
$$ f(\mathbf{x})= \bigwedge_{\mathbf{a}\in L^n} \Big(\bigvee_{i=1}^n \iota_{(\bigwedge \mathbf{a},a_i,\bigvee \mathbf{a}),f(\mathbf{a})}\big(\bigwedge \mathbf{x},x_i, \bigvee \mathbf{x}\big) \Big)$$ 
for all $\mathbf{x}\in L^n$.
\end{theorem}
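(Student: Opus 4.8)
The plan is to obtain Theorem~\ref{thm3} by concatenating Theorems~\ref{thm1} and~\ref{thm2}, after disposing of the two routine endpoints: that $\mathsf{Id}(L)$ is an admissible choice of $T$ in Theorem~\ref{thm1}, and that the $\iota$-type functions are themselves legitimate generators lying in $\mathsf{Id}(L)$.

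First I would verify the inclusion ``the clone generated by $\{\vee,\wedge\}\cup\{\iota\text{-type functions}\}$ is contained in $\mathsf{Id}(L)$''. The operations $\vee$ and $\wedge$ are idempotent aggregation functions, and any $\iota_{(a,b,c),d}$ with $a\le b\le c$ and $a\le d\le c$ is, by the observation preceding~\eqref{eq5}, the $\chi$-type function $\chi_{(a,b,c),d}$ from~\eqref{eq4}; since $a\le b\le c$ forces $a\wedge b\wedge c=a\le d$, Lemma~\ref{lem2} applies and gives $\iota_{(a,b,c),d}\in\mathsf{Id}^3(L)$. As $\mathsf{Id}(L)$ is composition-closed (the first lemma of this section), the clone generated by these functions is a nonempty composition-closed subset of $\mathsf{Agg}(L)$ contained in $\mathsf{Id}(L)$ and containing $\vee$ and $\wedge$.

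For the reverse inclusion, take an arbitrary $f\in\mathsf{Id}^n(L)$. Applying Theorem~\ref{thm1} with $T=\mathsf{Id}(L)$ (legitimate, as just noted) yields $h_{f(\mathbf{a})}^{\mathsf{Id}(L)}\in\mathsf{Id}(L)$ and $f(\mathbf{x})=\bigwedge_{\mathbf{a}\in L^n}h_{f(\mathbf{a})}^{\mathsf{Id}(L)}(\mathbf{x})$ for all $\mathbf{x}\in L^n$. Now substitute into this, for each $\mathbf{a}\in L^n$, the expression for $h_{f(\mathbf{a})}^{\mathsf{Id}(L)}$ furnished by Theorem~\ref{thm2}; this produces exactly the displayed ``moreover'' formula. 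Each $\iota$-function occurring in it has parameters obeying the constraints of~\eqref{eq5}: indeed $\bigwedge\mathbf{a}\le a_i\le\bigvee\mathbf{a}$ trivially, while $\bigwedge\mathbf{a}\le f(\mathbf{a})\le\bigvee\mathbf{a}$ follows from Lemma~\ref{lem1}; moreover its arguments $\bigwedge\mathbf{x},x_i,\bigvee\mathbf{x}$ are compositions of projections with the lattice operations. Hence the whole right-hand side is a composition built solely from $\vee$, $\wedge$ and allowed $\iota$-type functions, so $f$ lies in the clone they generate. Combining the two inclusions gives the asserted equality of clones, and the explicit formula is the one just derived.

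I do not expect a genuine obstacle here: the substantive work is already contained in Theorems~\ref{thm1} and~\ref{thm2}, and what is left is bookkeeping. The only point requiring a moment's care is the check that the $\iota$-functions produced for a given $f$ satisfy $\bigwedge\mathbf{a}\le f(\mathbf{a})\le\bigvee\mathbf{a}$, which is precisely where the idempotency of $f$ enters, via its equivalence with intermediateness (Lemma~\ref{lem1}).
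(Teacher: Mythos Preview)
Your proposal is correct and follows exactly the approach the paper intends: the paper simply states that Theorem~\ref{thm3} is obtained by ``putting Theorems~\ref{thm1} and~\ref{thm2} together'' and gives no further argument, while you spell out precisely that concatenation and the two bookkeeping checks (that $\mathsf{Id}(L)$ is an admissible $T$ in Theorem~\ref{thm1}, and that the occurring $\iota$-functions satisfy the required parameter constraints via Lemma~\ref{lem1} and Lemma~\ref{lem2}). There is nothing to add.
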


In what follows we show that the number of generating ternary functions can be significantly reduced.

\begin{lemma}\label{lem4}
Let $a,b,c,d\in L$ be such that $a\leq b\leq c$ and $a\leq d \leq c$. Then
$$ \iota_{(a,b,c),d}(x_1,x_2,x_3)= \iota_{(a,b,1),d}(x_1,x_2,x_3) \vee \iota_{(a,c,1),d}(x_1,x_3,x_3)$$ 
for all $(x_1,x_2,x_3)\in L^3$ satisfying $x_1\leq x_2 \leq x_3$.
\end{lemma}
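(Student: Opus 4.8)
The plan is to verify the claimed identity by a straightforward case analysis on the position of the ordered triple $(x_1,x_2,x_3)$ with $x_1\le x_2\le x_3$ relative to the two "thresholds" that appear in the three $\iota$-functions involved. Since $x_1\le x_2\le x_3$, we have $\bigwedge_{i=1}^3 x_i=x_1$ and $\bigvee_{i=1}^3 x_i=x_3$, so every $\iota$-value appearing in the statement is either $d\wedge x_3$ or $x_3$ (note $\iota_{(a,c,1),d}(x_1,x_3,x_3)$ has second and third arguments equal, so its join of arguments is still $x_1\vee x_3\vee x_3=x_3$, and the condition for the first branch is $(x_1,x_3,x_3)\le(a,c,1)$, i.e. $x_1\le a$ and $x_3\le c$). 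Thus both sides of the asserted equation take values only in $\{d\wedge x_3,\ x_3\}$, and since $d\wedge x_3\le x_3$, it suffices to determine exactly when each side equals the smaller value $d\wedge x_3$ and check these conditions agree; when the conditions fail, both sides equal $x_3$.

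First I would record the trigger conditions. The left-hand side $\iota_{(a,b,c),d}(x_1,x_2,x_3)$ equals $d\wedge x_3$ iff $(x_1,x_2,x_3)\le(a,b,c)$, i.e. $x_1\le a$, $x_2\le b$, and $x_3\le c$; otherwise it equals $x_3$. On the right-hand side, the first term $\iota_{(a,b,1),d}(x_1,x_2,x_3)$ equals $d\wedge x_3$ iff $x_1\le a$ and $x_2\le b$ (the third coordinate constraint $x_3\le 1$ is vacuous), else $x_3$; the second term $\iota_{(a,c,1),d}(x_1,x_3,x_3)$ equals $d\wedge x_3$ iff $x_1\le a$ and $x_3\le c$, else $x_3$. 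Because the right-hand side is the join of these two terms and $d\wedge x_3\le x_3$, the right-hand side equals $d\wedge x_3$ precisely when \emph{both} terms equal $d\wedge x_3$, i.e. iff $x_1\le a$, $x_2\le b$, and $x_3\le c$ — exactly the trigger condition for the left-hand side. In all remaining cases at least one term on the right is $x_3$, so the right-hand side is $x_3$, matching the left-hand side, which is also $x_3$ there. This establishes the identity pointwise.

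The only mild subtlety — and the one place I would be careful — is the substitution in the second $\iota$-term: one must read $\iota_{(a,c,1),d}(x_1,x_3,x_3)$ with the middle slot fed $x_3$ (not $x_2$) and the threshold triple $(a,c,1)$, so that its first branch activates under $x_1\le a$ and $x_3\le c$ and its value in that branch is $d\wedge(x_1\vee x_3\vee x_3)=d\wedge x_3$. Apart from tracking that substitution correctly, the argument is a routine finite case split using only $x_1\le x_2\le x_3$, the hypotheses $a\le b\le c$ and $a\le d\le c$ (which guarantee the relevant $\iota$-functions are among those of \eqref{eq5}), and the absorption law $d\wedge x_3\le x_3$. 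No genuine obstacle arises; the lemma is purely computational and its role is to let one replace a ternary $\iota_{(a,b,c),d}$ with $c$ arbitrary by ones whose third threshold is $1$, cutting down the generating family.
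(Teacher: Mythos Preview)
Your proof is correct and follows essentially the same approach as the paper's: a direct case split on whether $(x_1,x_2,x_3)\le(a,b,c)$, using that under $x_1\le x_2\le x_3$ every $\iota$-value involved is either $d\wedge x_3$ or $x_3$. Your formulation via matching ``trigger conditions'' is slightly more systematic than the paper's, which simply states that in each of the three sub-cases $x_1\nleq a$, $x_2\nleq b$, $x_3\nleq c$ the equality with value $x_3$ is easily seen, but the underlying argument is identical.
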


\begin{proof}
First assume that $(x_1,x_2,x_3)\leq (a,b,c)$ holds. Then $(x_1,x_2,x_3)\leq (a,b,1)$ as well as $(x_1,x_2,x_3)\leq (a,c,1)$, hence we obtain 
$$ \iota_{(a,b,c),d}(x_1,x_2,x_3)= d\wedge \bigvee_{i=1}^3 x_i=d \wedge x_3 = \iota_{(a,b,1),d}(x_1,x_2,x_3) \vee \iota_{(a,c,1),d}(x_1,x_3,x_3).$$

On the other hand, if $(x_1,x_2,x_3)\nleq (a,b,c)$ then $x_1\nleq a$ or $x_2\nleq b$ or $x_3\nleq c$. 
In all three cases it can be easily seen that 
$$ \iota_{(a,b,c),d}(x_1,x_2,x_3)=  x_3 = \iota_{(a,b,1),d}(x_1,x_2,x_3) \vee \iota_{(a,c,1),d}(x_1,x_3,x_3).$$

\end{proof}

\begin{corollary}\label{cor1}
The clone $\mathsf{Id}(L)$ of all idempotent aggregation functions on $L$ is generated by the lattice operations $\vee$ and $\wedge$ and by the ternary $\iota$-type functions
$\iota_{(a,b,1),d}$ where $a,b,d\in L$ with $a\leq b$ and $a\leq d$. 
\end{corollary}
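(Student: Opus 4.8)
The plan is to reduce Corollary \ref{cor1} directly to Theorem \ref{thm3} together with Lemma \ref{lem4}. By Theorem \ref{thm3} the clone $\mathsf{Id}(L)$ is generated by the lattice operations and all ternary $\iota$-type functions $\iota_{(a,b,c),d}$ with $a\leq b\leq c$ and $a\leq d\leq c$; so it suffices to show that each such function lies in the clone generated by $\vee$, $\wedge$ and the restricted family $\{\iota_{(a,b,1),d}\mid a\leq b,\ a\leq d\}$.

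First I would observe that for any ternary function $g$ of type $\iota$ the value $g(x_1,x_2,x_3)$ appearing in Theorem \ref{thm3} is only ever evaluated at arguments of the form $\big(\bigwedge\mathbf{x},\,x_i,\,\bigvee\mathbf{x}\big)$, which automatically satisfy the chain condition $x_1\le x_2\le x_3$. Hence for the purpose of generation we may freely replace any $\iota$-type function by any other function agreeing with it on the ordered cone $\{(x_1,x_2,x_3):x_1\le x_2\le x_3\}$. More conveniently, I would note that by composing with the lattice operations one can symmetrize: the term $\iota_{(a,b,c),d}\big(x_1\wedge x_2\wedge x_3,\ (x_1\wedge x_2)\vee(x_1\wedge x_3)\vee(x_2\wedge x_3),\ x_1\vee x_2\vee x_3\big)$ produces, from the generator $\iota_{(a,b,c),d}$, a function of three variables which when further fed the chain $(\bigwedge\mathbf{x},x_i,\bigvee\mathbf{x})$ gives back $\iota_{(a,b,c),d}(\bigwedge\mathbf{x},x_i,\bigvee\mathbf{x})$, since on an already ordered triple the inner median and meet/join terms are identities. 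Thus, working modulo the lattice operations, it is enough to reproduce $\iota_{(a,b,c),d}$ on ordered triples.

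Then the heart of the argument is simply Lemma \ref{lem4}: for $a\le b\le c$, $a\le d\le c$ and all $(x_1,x_2,x_3)$ with $x_1\le x_2\le x_3$,
$$\iota_{(a,b,c),d}(x_1,x_2,x_3)=\iota_{(a,b,1),d}(x_1,x_2,x_3)\ \vee\ \iota_{(a,c,1),d}(x_1,x_3,x_3).$$
The right-hand side is a composition of the lattice operation $\vee$, the projections $p_1^3,p_2^3,p_3^3$, and the two restricted generators $\iota_{(a,b,1),d}$ and $\iota_{(a,c,1),d}$, both of which have middle coordinate $1$ and satisfy $a\le b$, $a\le c$, $a\le d$ — i.e. they belong to the restricted family of Corollary \ref{cor1}. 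Substituting this identity into the representation formula of Theorem \ref{thm3}, and using the preceding paragraph to legitimately restrict attention to ordered triples, we obtain an explicit expression for every idempotent aggregation function $f$ as a composition of $\vee$, $\wedge$ and the functions $\iota_{(a,b,1),d}$, which is exactly the assertion of Corollary \ref{cor1}.

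I do not expect a genuine obstacle here, since Corollary \ref{cor1} is essentially an immediate consequence of a result (Theorem \ref{thm3}) and a lemma (Lemma \ref{lem4}) already proved. The one point that deserves a clean sentence is the bookkeeping remark that the identity of Lemma \ref{lem4} is only valid on ordered triples, so one must check that in Theorem \ref{thm3} the $\iota$-type functions are indeed only ever applied to arguments $(\bigwedge\mathbf{x},x_i,\bigvee\mathbf{x})$ which are ordered — this is visible directly from formula \eqref{eq6} — or, alternatively, replace each generator by its symmetrized version as indicated above so that the restriction to ordered triples is harmless. Everything else is a routine substitution.
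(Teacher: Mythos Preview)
Your proposal is correct and follows essentially the same route as the paper: invoke the representation formula of Theorem~\ref{thm3}, observe that the $\iota$-type functions there are only applied at arguments $(\bigwedge\mathbf{x},x_i,\bigvee\mathbf{x})$ which are ordered, and then substitute the identity of Lemma~\ref{lem4} directly into that formula. The paper's proof is exactly this two-line substitution; your symmetrization/median digression is a valid alternative way to handle the ``only on ordered triples'' caveat, but it is not needed and the paper does not use it.
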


\begin{proof}
Due to Theorem \ref{thm3}, any idempotent aggregation function $f\colon L^n \to L$ can be expressed as 
$$ f(\mathbf{x})= \bigwedge_{\mathbf{a}\in L^n} \Big(\bigvee_{i=1}^n \iota_{(\bigwedge \mathbf{a},a_i,\bigvee \mathbf{a}),f(\mathbf{a})}\big(\bigwedge \mathbf{x},x_i, \bigvee \mathbf{x}\big) \Big).$$
However, given arbitrary $\mathbf{a}\in L^n$ and $i\in \{1,\dots, n\}$, in view of $\bigwedge \mathbf{x} \leq x_i \leq \bigvee \mathbf{x}$ for all $\mathbf{x}\in L^n$, Lemma \ref{lem4} gives
$$ \iota_{(\bigwedge \mathbf{a},a_i,\bigvee \mathbf{a}),f(\mathbf{a})}\big(\bigwedge \mathbf{x},x_i, \bigvee \mathbf{x}\big)= \iota_{(\bigwedge \mathbf{a},a_i,1),f(\mathbf{a})}\big(\bigwedge \mathbf{x},x_i, \bigvee \mathbf{x}\big) \vee \iota_{(\bigwedge \mathbf{a},\bigvee \mathbf{a},1),f(\mathbf{a})}(\bigwedge \mathbf{x},\bigvee \mathbf{x},\bigvee \mathbf{x}). $$
 
\end{proof}

Obviously, the number of generators described in Corollary \ref{cor1} depends on the order structure of a considered lattice. We enumerate the number of these generators in two extremal cases: if $L$ is a chain and in the case when $L\cong M_{n-2}$, i.e., $L$ being isomorphic to a lattice consisting of $n-2$ mutually incomparable elements together with a bottom and a top element respectively. With respect to a fixed finite cardinality, the first case covers lattices with the most number of comparable elements, while the second case with the smallest number of comparable elements.

For a positive integer $n\geq 2$ consider the finite $n$-element chain $\mathbf{n}=\{0,1,\dots,n-1\}$ with the usual order. We enumerate the cardinality of $G_{\mathbf{n}}$, the generating set of $\mathsf{Id}(\mathbf{n})$ described in Corollary \ref{cor1}. In this case $G_{\mathbf{n}}=\big\{ \iota_{(i,j,n-1),k}: i,j,k\in\{0,\dots,n-1\}, i\leq j,k\big\}\cup\{\vee,\wedge\}$. Obviously, for a fixed $i\in \{0,\dots,n-1\}$ there are $(n-i)^2$ possibilities for indexes $j,k$ satisfying $i\leq j,k$. Hence we obtain 
$$ \left|G_{\mathbf{n}}\right|= n^2+ (n-1)^2 + \dots + 1^2 +2 = \sum_{i=1}^n i^2 = \frac{n}{3}(n+1)\left(n+\frac{1}{2}\right)+2,$$
i.e., $\left|G_{\mathbf{n}}\right|=O(n^3)$.

For the lattice $M_{n-2}$, $n\geq 4$ we obtain that the generating set $G_{M_{n-2}}$ consists of the lattice operations and $n^2+(n-2)\cdot 4+1$ functions of the type $\iota$. Hence in this case $\left|G_{M_{n-2}}\right|=n^2+4n-5$ and we can see that in a general case, the number of generators $\left|G_L\right|$, $L$ an $n$-element lattice, varies between $O(n^2)$ and $O(n^3)$.

\section{Concluding remarks}
In this paper we have presented explicitly certain generating sets of all intermediate (idempotent) aggregation functions on finite lattices. We believe that our results will be convenient for further analysis of this class with respect to better understanding how idempotent aggregation functions are constructed. In the future work we expect the extension of our results for certain important composition-closed subclasses of idempotent aggregation functions, especially for the class of Sugeno integrals on bounded distributive lattices.

\section*{Acknowledgements}
The first two authors were supported by the international project Austrian Science Fund (FWF)-Grant Agency of the Czech Republic (GA\v{C}R) number 15-346971; the third author by the Slovak VEGA Grant 1/0420/15; the fourth author by the IGA project of the faculty of Science Palack\'y University Olomouc no. PrF2015010 and by the Slovak VEGA Grant no. 2/0044/16.

\end{document}